\newcommand{\sset}{\mathsf{sSet}}
\newcommand{\sym}{\mathsf{Sym}}
\newcommand{\fun}{\mathsf{Fun}}
\newcommand{\set}{\mathsf{Set}}
\newcommand{\op}{\textup{op}}
\newcommand{\fin}{\mathbf{\Upsilon}}
\DeclareMathOperator{\id}{id}
\DeclareMathOperator{\sk}{sk}
\newcommand{\edgemap}{\mathscr{E}}
\newcommand{\bous}{\mathscr{B}}
\theoremstyle{plain}
\newtheorem{theorem}{Theorem}
\newtheorem{lemma}[theorem]{Lemma}
\newtheorem{proposition}[theorem]{Proposition}
\newtheorem{corollary}[theorem]{Corollary}
\theoremstyle{definition}
\newtheorem{definition}[theorem]{Definition}
\newtheorem{example}[theorem]{Example}
\newtheorem{notation}[theorem]{Notation}
\theoremstyle{remark}
\newtheorem{remark}[theorem]{Remark}
\begin{document}

\title{Dimension and partial groups}
\author{Philip Hackney}
\address{Department of Mathematics, University of Louisiana at Lafayette, USA}
\email{philip@phck.net}
\thanks{The first author was supported by the Louisiana Board of Regents through the Board of Regents Support fund, contract number LEQSF(2024-27)-RD-A-31. 
This work was supported by a grant from the Simons Foundation (PH \#850849). 
The second author was partially supported by IRN MaDeF (CNRS) and MIAI@Grenoble Alpes (ANR-19-P3IA-0003).
}

\author{Rémi Molinier}
 \address{Institut Fourier, UMR 5582, Laboratoire de Mathématiques, Université Grenoble Alpes, CS 40700, 38058 Grenoble cedex 9, France}
\email{remi.molinier@univ-grenoble-alpes.fr} 
% ORCID: \href{https://orcid.org/0000-0002-3742-5307}{0000-0002-3742-5307}

\keywords{dimension, partial group, groupoid, symmetric simplicial set}

\subjclass[2020]{Primary: 
18N50, % Simplicial sets, simplicial objects
20D20, % Sylow subgroups, Sylow properties, π-groups, π-structure
20N99, % Other generalizations of groups
Secondary: 
18F20, % Presheaves and sheaves, stacks, descent conditions
20L05, % Groupoids
55U10} % Simplicial sets and complexes in algebraic topology

\begin{abstract}
A partial group with $n+1$ elements is, when regarded as a symmetric simplicial set, of dimension at most $n$.
This dimension is $n$ if and only if the partial group is a group.
As a consequence of the first statement, finite partial groups are genuinely finite, despite being seemingly specified by infinitely much data. 
In particular, finite partial groups have only finitely many im-partial subgroups.
We also consider dimension of partial groupoids.
\end{abstract}

\maketitle

If $G$ is a nontrivial finite group, then the classifying space of $G$ is infinite dimensional.
Considering the classifying space as a simplicial set in the usual way, with set of $m$-simplices equal to $G^m$, there are nondegenerate simplices in arbitrarily high degree.
However, this simplicial set admits additional structure, namely that of a \emph{symmetric (simplicial) set}.
Considered as a symmetric set rather than a simplicial set, the classifying space of a group with $n+1$ elements turns out to have dimension $n$ (\cref{pg n skel}).
The basic reason for this striking fact is that the symmetric indexing category has far more codegeneracy maps than the simplicial category has, so it is much easier for an element to be degenerate, and therefore much easier for a symmetric set to be $n$-skeletal than for its underlying simplicial set to be so.
As an example, a 2-simplex of the form $(g^{-1},g) \in G^2$ is degenerate in the simplicial sense only when $g$ is the identity, but it is always degenerate in the symmetric sense (\cref{ex g inverse g}).

This paper is primarily concerned with this symmetric dimension in the case of partial groups. 
These partial groups arose in Chermak's proof of the existence and uniqueness of centric linking systems for saturated fusions systems \cite{Chermak:FSL}.
We will show that a partial group with $n+1$ elements has dimension at most $n$, and has dimension $n$ if and only if it is a group (\cref{pg n skel,cor non-idens dimension}).
In Chermak's definition, a nontrivial partial group is always specified by infinitely much data.
Beyond an underlying set $\mathcal{M}$, one must provide a necessarily infinite subset of the free monoid on $\mathcal{M}$, consisting of those words which may be multiplied.
We demonstrate that this apparent infiniteness is illusory, as finite dimension of finite partial groups allows us to consider them as finite objects.

\section{Background}
We first provide a brief reminder of the symmetric sets perspective on partial groups and partial groupoids due to the first author and Lynd \cite{HackneyLynd}.
The reader is referred there for more detail, to \cite[Definition 2.1]{Chermak:FSL} for the original definition of partial group, and to \cite{Gonzalez:ETPGL} for a simplicial sets perspective.
Following this, we introduce the relevant notions of skeleta and dimension for symmetric sets, which naturally arise due to the fact that the category $\fin$ below is an `Eilenberg--Zilber category' (see \cite{Campion:CSEZC} for a good overview of general theory).

\subsection{Partial groups as symmetric sets}\label{sec partial group symmetric sets}
We let $\fin$ denote the category whose objects are the sets $[n] = \{0, 1, \dots, n\}$ for $n\geq 0$ and whose morphisms are arbitrary functions.
Write $\sym \coloneqq \fun(\fin^\op, \set)$ for the category of \emph{symmetric (simplicial) sets}. 
For a symmetric set $X$, we write $X_n$ for the value at the object $[n]$.
For $\alpha \colon [m] \to [n]$, we write the associated map $X_n \to X_m$ as $x\mapsto x \cdot \alpha$.
When $\alpha \colon [1] \to [n]$ has $\alpha(0) = i$ and $\alpha(1) = j$, we will use the special notation $x_{ij}$ for $x \cdot \alpha \in X_1$.
It is also convenient to use categorical conventions in low degrees:

\begin{notation}\label{not cat conv}
The following terminology, indicated by \emph{italics}, introduces these conventions:
We call elements of $X_0$ \emph{objects} and elements of $X_1$ edges or \emph{morphisms}.
We write $f\colon a \to b$ for an edge $f\in X_1$ having \emph{domain} $f \cdot \iota_0 = a$ and \emph{codomain} $f\cdot \iota_1 = b$, where $\iota_k \colon [0] \to [1]$ is $0\mapsto k$. 
The unique map $[1] \to [0]$ gives an injection $X_0 \to X_1$ sending an object $x$ to the \emph{identity} $\id_x$.
The \emph{inverse} $f^{-1} \colon b \to a$ of $f\colon a\to b$ is $f_{10} = f\cdot \tau$, where $\tau$ is the nontrivial automorphism of $[1]$. 
If $g\colon b\to c$ is another edge and there is a unique simplex $x\in X_2$ with $x_{01} = f$ and $x_{12} = g$, we will write $gf = x_{02}$ for the \emph{composite}.
\end{notation}

\begin{definition}
For $n\geq 1$, a \emph{spine} of $[n]$ is a tree having $[n]$ as its set of vertices. 
\end{definition}
We can regard a spine $T$ as a subposet of the powerset of $[n]$ consisting of those two-element subsets which constitute edges along with all of the one-element subsets.
For a symmetric set $X$, we stretch notation and write $\lim_T X$ for the limit of the induced diagram of sets.
Namely, there is a functor $T \to \fin$ sending $\{i,j\}$ to $[1]$, $\{i\}$ to $[0]$, and the inclusion $\{i\} \to \{i,j\}$ to $\iota_0$ if $i < j$, and to $\iota_1$ if $i > j$, and $\lim_T X$ is the limit of the composite functor $T^\op \to \fin^\op \to \set$.
The map $\lim_T X \to \prod_{ \{i, j\} \in T } X_1$ is injective since each $i$ is contained in at least one edge $\{i,j\}$ of the spine $T$. 
We then have a map
\[
X_n \xrightarrow{\quad\edgemap_T\quad} \lim_T X \xhookrightarrow{\quad\quad} \prod_{ \{i, j\} \in T } X_1
\]
where the $\{i, j\}$-component of $\edgemap_T(x)$ is $x_{ij}$ if $i<j$ and $\{i,j\} \in T$. (In \cite{HackneyLynd}, the map $\edgemap_T$ is taken to have codomain the product, rather than the limit.)

We are particularly interested in two spines on $[n]$ -- the \emph{standard} one whose edges are of the form $\{i-1,i\}$, and the \emph{starry} one whose edges are of the form $\{0,i\}$.
As trees these are not isomorphic for $n \geq 3$.
In both cases, the limit takes the form $\lim_T X = X_1 \times_{X_0} X_1 \cdots \times_{X_0} X_1$, but the fiber products use different maps $X_1 \to X_0$.
When $T$ is the standard spine on $[n]$, we write $\edgemap_n \colon X_n \to X_1 \times_{X_0} X_1 \cdots \times_{X_0} X_1$ for the usual Segal map $x\mapsto (x_{01}, x_{12}, \dots, x_{(n{-}1)n})$ and when $T$ is the starry spine we write $\bous_n \colon X_n \to X_1 \times_{X_0} X_1 \cdots \times_{X_0} X_1$ for the Bousfield--Segal map $x\mapsto (x_{01}, x_{02}, \dots, x_{0n})$ (see \cite[\S6]{Bergner:AID2}).

\begin{definition}
A symmetric set $X$ is \emph{spiny} if for every $n\geq 1$ and every spine $T$ of $[n]$, the map $\edgemap_T$ is injective.
\end{definition}

\begin{theorem}\label{groupoid characterizations}
Let $X$ be a symmetric set.
The symmetric set $X$ is spiny if and only if for each $n \geq 1$ there is a spine $T$ of $[n]$ such that $\edgemap_T$ is injective.
The following are equivalent:
\begin{enumerate}
\item For each $n\geq 1$, the map $\edgemap_T$ is a bijection for every spine $T$ of $[n]$.\label{item heart all} 
\item For each $n \geq 1$ there is a spine $T$ of $[n]$ such that $\edgemap_T$ is bijective.\label{item heart some}
\item The symmetric set $X$ is isomorphic to the nerve of a groupoid.\label{item heart special}
\end{enumerate}
\end{theorem}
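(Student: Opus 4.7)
The plan is to dispatch the easy implications first and concentrate on the combinatorial core, which reduces everything to the $\Sigma_{n+1}$-symmetry supplied by $\fin$ together with induction via leaf-removal. The easy implications are immediate: spiny trivially yields the ``some spine'' condition, (i) $\Rightarrow$ (ii) is automatic, and (iii) $\Rightarrow$ (i) is a direct verification, since an $n$-simplex of the nerve of a groupoid is a functor from the chaotic groupoid on $[n]$, and such a functor is freely determined by its values on any spanning tree of vertices.

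The substantive content is therefore the two ``some spine $\Rightarrow$ every spine'' statements and the implication (ii) $\Rightarrow$ (iii). The first technical input is $\Sigma_{n+1}$-equivariance: for a permutation $\sigma$ of $[n]$ and spine $T$, the map $\edgemap_{\sigma T}$ is intertwined with $\edgemap_T$ by the bijection $x \mapsto x \cdot \sigma$ on $X_n$ and the corresponding reindexing on $\prod X_1$, so that injectivity and bijectivity transfer throughout any $\Sigma_{n+1}$-orbit of spines. This dispatches $n = 1$ (trivially) and $n = 2$ completely, since the three spines on $[2]$ form a single $\Sigma_3$-orbit.

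For $n \geq 3$, where paths and branched trees occupy distinct orbits, I would proceed by induction on $n$ via leaf-removal. If $v$ is a leaf of $T$ with unique neighbor $u$, then $\edgemap_T$ factors through the joint map $x \mapsto (x \cdot \delta^v, x_{uv})$ from $X_n$ to $X_{n-1} \times_{X_0} X_1$ followed by $\edgemap_{T'}$ on the first coordinate, where $T' = T \setminus v$ is a spine of $[n-1]$ and $\delta^v \colon [n-1] \hookrightarrow [n]$ omits $v$. Assuming inductively that every spine on $[n-1]$ has injective (resp.\ bijective) $\edgemap$, the corresponding property for $\edgemap_T$ reduces to the same property for the joint map, and the inductive hypothesis is itself bootstrapped by applying leaf-removal to the given injective/bijective spine $T_n$ (using a section $\pi^v$ of $\delta^v$ to lift tuples). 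Since $\Sigma_{n+1}$ acts transitively on ordered pairs of distinct elements of $[n]$, the joint-map condition is independent of the chosen leaf-adjacent pair, and since every spine has at least one leaf, this same condition controls $\edgemap_{T''}$ for any other spine $T''$ on $[n]$, completing the induction.

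Finally, (ii) $\Rightarrow$ (iii) follows once bijectivity of every spine is established: the standard spine on $[2]$ delivers the Segal condition, exhibiting $X$ as the nerve of a category, and the starry spine on $[2]$ produces inverses via the rule that $f^{-1}$ is the $x_{12}$-edge of the unique 2-simplex with $x_{01} = f$ and $x_{02} = \id$; higher-dimensional bijectivity encodes associativity and the remaining coherences. The main obstacle I foresee is the combinatorial induction for $n \geq 3$: making the leaf-removal factorization precise, handling injectivity and bijectivity uniformly within one framework, and verifying that the joint-map condition is genuinely independent of the choice of leaf-adjacent pair so that the induction can be closed simultaneously for all spines rather than only those in the orbit of the distinguished one.
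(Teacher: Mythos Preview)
Your outline is sound and the leaf-removal induction goes through; the concern you flag at the end is addressable. The factorization $\edgemap_T = (\edgemap_{T'} \times \id) \circ \text{joint}_{u,v}$ into $X_{n-1} \times_{X_0} X_1$ is precise once you identify $[n]\setminus\{v\}$ with $[n-1]$, and since $\edgemap_{T'}$ is injective (resp.\ bijective) by the inductive hypothesis, injectivity (resp.\ bijectivity) of $\edgemap_T$ is equivalent to that of the joint map. The $2$-transitivity of $\Sigma_{n+1}$ on $[n]$ genuinely makes the joint-map condition independent of the pair $(u,v)$, so once you extract it from the distinguished spine $T_n$ you may feed it back into the factorization for any other spine. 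For the bootstrap you mention, note that the hypothesis already supplies a good spine in every degree, so the inductive hypothesis on $[n-1]$ is simply assumed rather than derived from degree $n$; the lifting via a retraction of $\delta^v$ is only needed if you want the stronger statement that one good spine in top degree suffices.

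The paper's proof, by contrast, is entirely by citation: the first statement is Theorem~3.6 of \cite{HackneyLynd} (whose proof, per the appendix, is itself inductive, so your argument is presumably a reconstruction of it or a close variant), and the equivalence with \eqref{item heart special} is reduced to Grothendieck's characterization of groupoid nerves via bijectivity of the standard Segal maps $\edgemap_n$. Your route is more self-contained and exposes the mechanism, at the cost of length; the paper's route is modular and short but requires the reader to consult two external sources. For \eqref{item heart special}, your direct verification that a functor out of the chaotic groupoid is freely determined on a spanning tree is cleaner than invoking Grothendieck for one direction, and your extraction of inverses from the starry spine on $[2]$ is essentially the content of the other direction; the phrase ``higher-dimensional bijectivity encodes associativity and the remaining coherences'' should be replaced by the observation that bijectivity of all standard Segal maps $\edgemap_n$ already makes the underlying simplicial set a nerve of a category, after which the $\tau$-action on $X_1$ (or equivalently the degenerate $2$-simplex $f\cdot\sigma$ with $\sigma(0)=0$, $\sigma(1)=1$, $\sigma(2)=0$) exhibits inverses.
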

\begin{proof}
The first statement is \cite[Theorem 3.6]{HackneyLynd}, and its proof is readily adapted to establish the equivalence of \eqref{item heart all} and \eqref{item heart some}.
A consequence of \cite[Proposition 4.1]{Grothendieck:TCTGA3} is that $X$ is a nerve of a groupoid if and only if the Segal map $\edgemap_n$ is a bijection for each $n\geq 1$.
Thus \eqref{item heart special} implies \eqref{item heart some} and is implied by \eqref{item heart all}.
\end{proof}

A symmetric set $X$ is said to be \emph{reduced} if $X_0$ has only one element.
In \cite{HackneyLynd} it was shown that the category of partial groups is equivalent to the category of reduced spiny symmetric sets.
Such a symmetric set $X$ gives, in the original definition of \cite{Chermak:FSL}, a partial group structure on $X_1$. 
We henceforth will refer to reduced spiny symmetric sets as \emph{partial groups}, and will refer to spiny symmetric sets as \emph{partial groupoids}.
If $X$ is a partial groupoid, then there is an injective function
\[
	X_n \to \operatorname{Mat}_{n+1,n+1}(X_1) \qquad x \mapsto (x_{ij}) = 
	\begin{bmatrix}
x_{00} & x_{01} & x_{02} & \cdots & x_{0n} \\
x_{10} & x_{11} & x_{12} & \cdots & x_{1n} \\
\vdots & & & & \vdots \\
x_{n0} & x_{n1} & x_{n2} & \cdots & x_{nn}
\end{bmatrix}
\]
taking values in $(n+1)$-by-$(n+1)$ matrices. 
We frequently identify $x \in X_n$ with its associated matrix $(x_{ij})$.
One can read off essential information from this matrix form, for instance $x_{ij}^{-1} = x_{ji}$ and $x_{jk} x_{ij} = x_{ik}$.
The original definition prioritizes the superdiagonal $\edgemap_n(x) = (x_{01}, x_{12}, \dots, x_{(n{-}1)n})$.

\subsection{Skeleta and dimension}
Since $\fin$ is an Eilenberg--Zilber category (see \cite[Examples 6.8]{BergerMoerdijk:OENRC}), symmetric sets have robust notions of dimension and well-behaved skeletal filtrations.
Let $X$ be a symmetric set.
An element $x \in X_n$ is \emph{degenerate} if there is a noninvertible surjection $\sigma \colon [n] \to [k]$ and an element $y\in X_k$ such that $x = y \cdot \sigma$.
Otherwise $x$ is said to be \emph{nondegenerate}.
Notice that being degenerate or nondegenerate is invariant under the action of automorphisms of $[n]$ on $X_n$.

\begin{definition}\label{def skeletal}
If $X$ is a nonempty symmetric set and $n\geq 0$, then the \emph{$n$-skeleton} $\sk_n X \subseteq X$ is the smallest symmetric subset containing all of the $n$-simplices of $X$.
We say that $X$ is \emph{$n$-skeletal} if $\sk_n X = X$.
If $X$ is $n$-skeletal but not $(n{-}1)$-skeletal, we say that $X$ has \emph{dimension} $n$.
\end{definition}

The $n$-skeleton of $X$ can also be obtained by first restricting along $\iota \colon \fin_{\leq n} \to \fin$ to the full subcategory containing the objects $[k]$ for $k\leq n$, and then left Kan extending along this same functor: $\sk_n X \cong \iota_! \iota^* X$ by \cite[Corollary 6.10]{BergerMoerdijk:OENRC}.
The elements of $\sk_n X$ of degree $m > n$ are all degenerate on elements in degree $n$,
meaning that they are all images of elements of $X_n$ under the action of surjective maps $[m] \to [n]$.

We warn that all of these concepts behave differently compared to their counterparts in simplicial sets.
It was already mentioned in the introduction that nontrivial finite groups are infinite dimensional as simplicial sets but finite dimensional as symmetric sets.
There is also a numerical difference with products:

\begin{remark}
Let $X$ and $Y$ be symmetric sets such that $X$ has dimension $n$ and $Y$ has dimension $m$. 
Then $X\times Y$ has dimension $q$ where $q=nm+n+m$.
This boils down to the fact that the set $[n] \times [m]$ has $(n+1)(m+1) = q+1$ elements (so $[n] \times [m] \cong [q]$).
In contrast, if $X$ and $Y$ are simplicial sets of dimensions $n$ and $m$, then $X\times Y$ has dimension $n+m$ \cite[\href{https://kerodon.net/tag/04ZS}{Tag 04ZS}]{kerodon}.
\end{remark}

\begin{lemma}\label{lem degenerate element}
Let $X$ be a partial groupoid, and $x = (x_{ij}) \in X_m$.
Then $x$ is degenerate if and only if $x_{ij}$ is an identity for some $i\neq j$.
\end{lemma}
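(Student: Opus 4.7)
The plan is to prove both directions separately. The forward direction follows immediately by unpacking the definition of degeneracy, while the backward direction requires constructing an explicit degeneracy witness, with the essential input being a short unit-style lemma for $2$-simplices.

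For the forward direction, suppose $x = y \cdot \sigma$ for a noninvertible surjection $\sigma \colon [m] \to [k]$. Since $\sigma$ is not injective, pick $i \ne j$ with $\sigma(i) = \sigma(j) = p$. The map $[1] \to [m] \xrightarrow{\sigma} [k]$ that extracts $x_{ij}$ is then constant at $p$, so it factors through $[0]$, and consequently $x_{ij} = \id_{y \cdot [0 \mapsto p]}$ is an identity.

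For the converse, I assume without loss of generality that $i < j$ (otherwise I replace $x_{ij}$ by its inverse $x_{ji} = x_{ij}^{-1}$, which is also an identity). Let $\delta \colon [m-1] \to [m]$ be the order-preserving injection omitting $j$, and let $\sigma \colon [m] \to [m-1]$ be the surjection collapsing $j$ onto $i$, so that $\sigma \delta = \id_{[m-1]}$ and the endomorphism $\delta\sigma$ of $[m]$ fixes every index except $j$, which is sent to $i$. Set $y \coloneqq x \cdot \delta$; the aim is to show $y \cdot \sigma = x$. Since $X$ is a partial groupoid, its matrix representation $X_m \hookrightarrow \operatorname{Mat}_{m+1,m+1}(X_1)$ is injective, so it suffices to verify $x_{kl} = x_{\delta\sigma(k),\delta\sigma(l)}$ for every pair $(k,l)$. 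The entries away from row/column $j$ match trivially, so the remaining work amounts to proving $x_{jl} = x_{il}$ (and the symmetric $x_{kj} = x_{ki}$) together with $x_{jj} = x_{ii}$.

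The main step is the following unit-style claim for partial groupoids: if $p \in X_2$ has $p_{01}$ an identity $\id_a$, then $p_{02} = p_{12}$. I would prove this by comparing $p$ with the $2$-simplex $p_{12} \cdot t$, where $t \colon [2] \to [1]$ sends $0, 1, 2$ to $0, 0, 1$; a direct computation shows that both simplices carry the standard spine $(\id_a, p_{12})$ and have matching vertices, so spininess forces them to agree. Applying this claim to the $2$-simplex $x \cdot \beta$ for $\beta \colon [2] \to [m]$ sending $0, 1, 2$ to $i, j, l$ (whose $01$-edge is $x_{ij} = \id_a$) yields $x_{il} = x_{jl}$ whenever $l \notin \{i, j\}$; the diagonal and boundary subcases reduce to $x_{ji} = x_{ij}^{-1} = \id_a$ and to the observation that vertices $i$ and $j$ of $x$ coincide, making $x_{ii}$ and $x_{jj}$ the same identity edge. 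I expect the main obstacle to be cleanly isolating and verifying this $2$-simplex lemma; once that is in hand, the remaining manipulations with $\delta\sigma$ are routine bookkeeping in the matrix form.
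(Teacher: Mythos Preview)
Your proof is correct and follows essentially the same route as the paper: construct $y = (x \cdot \delta) \cdot \sigma$ and verify $y = x$ using spininess, with the crucial step being the unit-style fact that composing with an identity edge does nothing. The paper's version is slightly more economical in that it checks equality via the Bousfield--Segal map $\bous_m$ (only the entries $x_{0t}$) rather than the full matrix, so the single remaining case $t = j$ reduces directly to $y_{0j} = x_{0i} = x_{ij}x_{0i} = x_{0j}$ without needing to isolate your $2$-simplex lemma as a separate statement.
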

\begin{proof}
Suppose $x = y \cdot \sigma$ for $\sigma \colon [m] \to [m-1]$ and $y\in X_{m-1}$.
Then there are $i\neq j$ and $k$ with $\sigma(i) = k = \sigma(j)$.
We then have $x_{ij} = y_{kk}$ is an identity.

Suppose $x_{ij}$ is an identity for some $i\neq j$, and let $\sigma \colon [m] \to [m-1]$ be any surjective function with $\sigma(i) = \sigma(j) = k$.
Let $\delta \colon [m-1] \to [m]$ be the section of $\sigma$ with $\delta (k) = i$, and let $y = (x \cdot \delta) \cdot \sigma \in X_m$.
Since $\delta \sigma (t) = t$ for $t\neq j$, we have $y_{0t} = x_{0t}$ so long as $t\neq j$.
But $y_{0j} = x_{0i} = x_{ij} x_{0i} = x_{0j}$. 
Since $X$ is spiny, $x$ is equal to $y$, hence is degenerate.
\end{proof}

\begin{example}\label{ex g inverse g}
Let $X$ be a partial groupoid and consider a 2-simplex $x \in X_2$ such that $\edgemap_2(x) = (f^{-1}, f)$ with $f\in X_1$. 
By \cref{lem degenerate element}, this 2-simplex is degenerate since $x_{02} = x_{12}x_{01} = ff^{-1}$ is the identity.
This can also be seen directly, as $x = f\cdot \chi$, where $\chi$ is the surjective map $[2] \to [1]$ sending $i$ to $|1-i|$.
\end{example}

\begin{comment}
\begin{remark}
Though the forgetful functor $\sym \to \sset$ does not preserve skeletality, it does preserve coskeletality.
This is a formal consequence of the left adjoint symmetrization functor $\sset \to \sym$ preserving $n$-skeleta (see \cite[Lemma 8.2.3]{Antokoletz:NAMCHT}).
A symmetric set $X$ being $n$-coskeletal means that is the right Kan extension of $\iota^* X$ along the inclusion $\iota \colon \fin_{\leq n} \to \fin$. 
\end{remark}
\end{comment}

\begin{theorem}\label{skel spiny thm}
Suppose $X$ is a $q$-skeletal symmetric set.
Then $X$ is spiny if and only if $\edgemap_T$ is injective for some (or every) spine $T$ of $[n]$ for each $1 \leq n \leq q$.
\end{theorem}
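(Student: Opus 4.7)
My plan: the forward implication follows directly from the definition, since if $X$ is spiny then $\edgemap_T$ is injective for every spine at every $n\geq 1$, and in particular for $1 \leq n \leq q$. The real content is the converse.

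For the converse I will apply the first statement of \cref{groupoid characterizations} to reduce to exhibiting, for each $n \geq 1$, a single spine $T$ of $[n]$ with $\edgemap_T$ injective. The case $n \leq q$ is the hypothesis; for $n > q$ I will proceed by strong induction on $n$ and show that the standard Segal spine always works. So fix $x, x' \in X_n$ with $x_{i-1,i} = x'_{i-1,i}$ for each $i=1,\ldots,n$, aiming to prove $x = x'$ in three stages.

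First, for each pair $(i,j)$ with $0 \leq i < j \leq n$ and $j - i < n$, the face $x\cdot\alpha \in X_{j-i}$ (with $\alpha\colon [j-i]\hookrightarrow[n]$, $a\mapsto i+a$) has standard spine comprised of consecutive edge values of $x$; the inductive hypothesis (or the base hypothesis when $j-i \leq q$) then gives $x\cdot\alpha = x'\cdot\alpha$, whence $x_{ij}=x'_{ij}$. Second, for the corner $(0,n)$, under the assumption $q \geq 2$, I pick an injection $\alpha\colon [q] \hookrightarrow [n]$ with $\alpha(0)=0$, $\alpha(q)=n$, and no consecutive image-pair equal to $\{0,n\}$ (for instance $\alpha(a)=a$ on $a<q$ and $\alpha(q)=n$); the standard spine entries of $x\cdot\alpha$ then all appear among the matches from the first stage, so the hypothesis at dimension $q$ forces $x\cdot\alpha = x'\cdot\alpha$, and $x_{0,n} = x'_{0,n}$ follows. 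Third, from the resulting matrix equality $(x_{ij}) = (x'_{ij})$ I deduce $x = x'$ via $q$-skeletality: for each injection $\gamma \colon [q] \hookrightarrow [n]$ the matrix of $x\cdot\gamma$ equals that of $x'\cdot\gamma$, so $x\cdot\gamma = x'\cdot\gamma$ by the hypothesis at dimension $q$, and then using the description $X \cong \iota_!\iota^*X$ with $\iota\colon\fin_{\leq q}\hookrightarrow\fin$ together with Eilenberg--Zilber uniqueness of the nondegenerate factorization allows one to conclude $x = x'$.

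The main obstacle is the corner entry in the second stage: every other matrix entry is determined by inductively reducing to a smaller face, but the long diagonal $(0,n)$ admits no smaller face containing both endpoints, so one must instead route through a $q$-dimensional intermediate face, a trick that crucially requires $q \geq 2$. The cases $q \in \{0,1\}$ will be treated separately, where any $q$-skeletal symmetric set turns out to be automatically spiny: its higher simplices are either constants or of the form $f\cdot\sigma$ with $\sigma\colon[n]\twoheadrightarrow[1]$, and both are reconstructible from any spine by a direct case analysis on the pattern of identity entries.
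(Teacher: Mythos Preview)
Your route differs substantially from the paper's. The paper uses the Bousfield--Segal spine $\bous_m$ and argues directly, without induction on the degree: given $z,z'\in X_m$ with $\bous_m(z)=\bous_m(z')$, it writes each via Eilenberg--Zilber as a nondegenerate simplex acted on by a surjection (normalized so that $0\mapsto 0$), and then a short chain of lemmas manipulating sections of these surjections forces the two presentations to coincide. This is uniform in $q$, with no separate treatment of $q\le 1$. Your Stages~1--2, by contrast, inductively upgrade ``standard Segal spine agrees'' to ``entire edge matrix agrees'', the corner trick requiring $q\ge 2$; these stages are correct as written.

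The real issue is Stage~3, which you present as a formality but is where the content lies. Citing $X\cong\iota_!\iota^*X$ together with Eilenberg--Zilber uniqueness does not by itself give $x=x'$ from ``$x\cdot\gamma=x'\cdot\gamma$ for every injection $\gamma\colon[q]\hookrightarrow[n]$'': that isomorphism is a \emph{colimit} description of $X_n$, whereas your assembled hypothesis says $x$ and $x'$ agree in the \emph{limit} $\iota_*\iota^*X$, and injectivity of the comparison $\iota_!\iota^*X\to\iota_*\iota^*X$ is exactly what must be argued. A valid argument runs: write $x=y\cdot\sigma$, $x'=y'\cdot\sigma'$ via Eilenberg--Zilber; a section $\gamma$ of $\sigma$ gives $y=y'\cdot(\sigma'\gamma)$, forcing $\sigma'\gamma$ to be a bijection and reducing to $y=y'$; then \emph{every} section $\gamma$ of $\sigma$ makes $\sigma'\gamma$ an automorphism fixing $y$, and varying $\gamma$ over one fiber of $\sigma$ at a time (two permutations of $[k]$ agreeing off a single point must coincide) forces $\sigma$ and $\sigma'$ to have identical fibers, whence $\sigma'=\pi\sigma$ with $y\cdot\pi=y$ and $x=x'$. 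This manipulation of Eilenberg--Zilber data through sections is precisely what the paper's appendix lemmas carry out (there starting only from $\bous_m$-agreement rather than full matrix agreement), so once you supply it your inductive Stages~1--2 become an avoidable detour relative to the paper's direct route.
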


This theorem will not play a role in subsequent sections, and its proof appears as the appendix.
It follows from \cref{skel spiny thm} that 1-skeletal symmetric sets are automatically spiny.

\begin{remark}
The category of partial groups of dimension at most $n$ is equivalent to the (reflective) subcategory of $\fun(\fin_{\leq n}^\op, \set)$, consisting of those reduced objects which are spiny in the appropriate sense.
\end{remark}

\begin{example}
The remark implies there are exactly two indecomposable partial groups which are one-dimensional: the cyclic group of order 2, and the free partial group on one generator.
All other one-dimensional partial groups are coproducts of these.
By \emph{indecomposable} we mean that $X$ cannot be written nontrivially as a coproduct $A \vee B$ in the category of partial groups.
\end{example}

\section{Skeletality of partial groupoids}

A symmetric set is $n$-skeletal if and only if each of its connected components is $n$-skeletal, and if it is nonempty then it is $n$-dimensional if and only if it is $n$-skeletal and has an $n$-dimensional connected component.
Here, a symmetric set is called \emph{connected} if its underlying simplicial set is connected.
As such, in this section we mostly restrict attention to connected partial groupoids. 
We use the convention that connected objects are nonempty.

\begin{definition}\label{def size p}
Suppose $X$ is a partial groupoid with $X_1$ nonempty and finite.
By abuse of notation, if $x\in X_0$ we let $\hom(x,-)$ denote the set of edges $x\to y$ for some $y$.
For each $x\in X_0$, let $n_x$ be the cardinality of the set $\hom(x,-) \setminus \{\id_x\}$ of nonidentity edges $x\to y$, and let $p = p(X)$ be the maximum among all $n_x$.
\end{definition}
If $X$ is a partial group, then $p$ is just the number of  nonidentity elements in $X_1$.

\begin{proposition}\label{dimension upper bound}
If $X$ is a nonempty finite partial groupoid, then $X$ is $p$-skeletal.
\end{proposition}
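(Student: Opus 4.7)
The plan is to establish the stronger fact that every simplex $x \in X_m$ with $m > p$ is degenerate; since $\fin$ is Eilenberg--Zilber, this suffices for $p$-skeletality, as any degenerate element in high degree reduces via the standard procedure to a nondegenerate element in some degree $\leq p$.

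Fix such an $x \in X_m$ and let $v_0, v_1, \ldots, v_m$ denote its vertices. I would focus on the first row of the matrix of $x$, namely the $m$ edges $x_{01}, \ldots, x_{0m}$, all emanating from $v_0$. By \cref{def size p} the vertex $v_0$ has $n_{v_0} \leq p < m$ nonidentity outgoing edges. If some $x_{0j}$ (with $j \geq 1$) equals $\id_{v_0}$, then $x$ is degenerate by \cref{lem degenerate element} and we are done. Otherwise all $x_{0j}$ are nonidentity edges out of $v_0$, and since there are only $n_{v_0} < m$ such edges the pigeonhole principle yields indices $1 \leq i < j \leq m$ with $x_{0i} = x_{0j}$.

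In this remaining case I would show $x_{ij} = \id_{v_i}$ and once again appeal to \cref{lem degenerate element}. Equality of the edges $x_{0i}$ and $x_{0j}$ forces their codomains to agree, so $v_i = v_j$ and $x_{ij}$ is a loop at $v_i$. Restricting $x$ along the maps $[2] \to [m]$ sending $(0,1,2)$ to $(i,0,j)$ and to $(i,0,i)$ produces two 2-simplices in $X$, whose partial-groupoid multiplication relations read
\[
x_{ij} = x_{0j} \cdot x_{i0} \qquad \text{and} \qquad \id_{v_i} = x_{ii} = x_{0i} \cdot x_{i0}.
\]
Since $x_{0i} = x_{0j}$, the right-hand sides agree, and hence $x_{ij} = \id_{v_i}$.

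The argument is short, and the only step requiring genuine care is the Case 2 computation of $x_{ij}$: getting the indices right in the partial-groupoid composition rules is the main hazard, but this becomes mechanical once one identifies the right pair of 2-simplices to compare. Everything else is pigeonhole plus a direct appeal to \cref{lem degenerate element}.
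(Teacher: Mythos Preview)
Your argument is correct and follows essentially the same route as the paper: pigeonhole on the edges $x_{0a}$ emanating from the first vertex, then the identity $x_{ij}=x_{0j}x_{i0}$ together with \cref{lem degenerate element}. The paper avoids your case split by including $a=0$ in the collection $\{x_{0a}\mid a\in[m]\}\subseteq\hom(v_0,-)$, which has at most $p+1$ elements while $|[m]|=m+1>p+1$, so the repetition $x_{0a}=x_{0b}$ is obtained in one stroke; your two cases simply separate out the possibility $a=0$.
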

\begin{proof}
It is enough to prove that each $x = (x_{ij}) \in X_m$ is degenerate for $m > p$.
The elements $x_{0a}$ all share a common domain $y \in X_0$.
Since the set $\{ x_{0a} \mid a \in [m] \} \subseteq X_1$ has at most $p+1$ elements, there is a pair $a < b$ with $x_{0a} = x_{0b}$.
We then have $x_{ab} = x_{0b}x_{a0} = x_{0b}x_{0a}^{-1}$ is an identity, so $x$ is degenerate.
\end{proof}

\begin{proposition}\label{prop finite conn groupoid}
If $X$ is a finite connected groupoid, then $X$ has dimension $p$.
\end{proposition}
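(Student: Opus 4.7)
The plan is as follows. By \cref{dimension upper bound} we already know $X$ is $p$-skeletal, so it remains only to produce a nondegenerate $p$-simplex in order to conclude that the dimension is exactly $p$.

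The key observation is that since $X$ is an honest groupoid, \cref{groupoid characterizations} tells us that the Bousfield--Segal map
\[
\bous_p \colon X_p \xrightarrow{\;\cong\;} X_1 \times_{X_0} X_1 \times_{X_0} \cdots \times_{X_0} X_1
\]
is a bijection onto the $p$-fold fiber product along the source map $X_1 \to X_0$. This lets us construct $p$-simplices freely by prescribing their starry spines. I would choose a vertex $y \in X_0$ realizing $n_y = p$ and list the $p$ distinct non-identity edges out of $y$ as $f_1, \ldots, f_p$. Taking $\sigma \in X_p$ to be the unique simplex with $\bous_p(\sigma) = (f_1, \ldots, f_p)$ gives $\sigma_{0i} = f_i$ for $1 \leq i \leq p$.

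To confirm that $\sigma$ is nondegenerate I would apply \cref{lem degenerate element} and check that no off-diagonal entry $\sigma_{ij}$ is an identity. Using the multiplication identity $\sigma_{ij} = \sigma_{0j} \sigma_{i0}$ in partial groupoids, together with $\sigma_{i0} = \sigma_{0i}^{-1} = f_i^{-1}$, for $1 \leq i, j \leq p$ with $i \neq j$ one obtains $\sigma_{ij} = f_j f_i^{-1}$, which cannot be an identity since the $f_i$ were chosen distinct. The remaining off-diagonal entries $\sigma_{0i} = f_i$ and $\sigma_{i0} = f_i^{-1}$ are non-identities by construction.

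There is no serious obstacle here; the only real content is recognizing that $X$ being an honest groupoid (rather than merely a partial groupoid) is exactly what makes $\bous_p$ surjective, so that any tuple of distinct edges out of $y$ can be realized as the starry spine of some $p$-simplex. Everything else is routine bookkeeping with the matrix arithmetic available in a partial groupoid.
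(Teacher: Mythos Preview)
Your proposal is correct and follows essentially the same approach as the paper: use surjectivity of the Bousfield--Segal map $\bous_p$ (available because $X$ is a genuine groupoid) to realize a $p$-simplex whose starry spine enumerates the nonidentity edges out of a chosen vertex, and then verify nondegeneracy via \cref{lem degenerate element} by computing $\sigma_{ij} = f_j f_i^{-1}$. The only cosmetic difference is that the paper remarks that connectedness forces all $n_x$ to coincide, whereas you simply pick a vertex achieving the maximum; both are fine.
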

\begin{proof}
As $X$ is a connected groupoid, the cardinality of $\hom(x,-)$ does not depend on the object $x$.
Since the Bousfield--Segal map $\bous_p$ is a bijection, there is an element $(f_{ij}) \in X_p$ with $\{ f_{01}, \dots, f_{0p} \}$ the set of nonidentity morphisms with domain $x$.
Then $f_{ij} = f_{0j} f_{i0} = f_{0j} f_{0i}^{-1}$ is an identity if and only if $i=j$.
Hence $(f_{ij}) \in X_p$ is nondegenerate, so $X$ is not $(p{-}1)$-skeletal.
\end{proof}

If $X$ is a finite connected groupoid and $x\in X_0$, then $p + 1 = |\hom(x,x)| \times |X_0|$. 

\begin{corollary}\label{pg n skel}
If $X$ is a partial group with $n+1$ elements, then $X$ is $n$-skeletal. 
If $X$ is actually a group, then $X$ has dimension $n$.
\qed
\end{corollary}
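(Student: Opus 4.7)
The plan is to obtain both statements as immediate consequences of the two preceding propositions, with only a small bookkeeping step to identify the invariant $p(X)$ from \cref{def size p} with the integer $n$ in the statement.

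First I would observe that a partial group is by definition a reduced spiny symmetric set, so $X_0$ is a singleton $\{*\}$ and every edge of $X$ has $*$ as its source and target. In particular, $\hom(*,-) = X_1$ and the unique identity is $\id_* \in X_1$. Writing $|X_1| = n+1$, this means the set of nonidentity edges has cardinality $n$, so $p(X) = n$.

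The first assertion then follows directly by applying \cref{dimension upper bound}: $X$ is nonempty and finite, hence $p(X)$-skeletal, i.e.\ $n$-skeletal. For the second assertion, if $X$ is a group then it is in particular a finite connected groupoid, so \cref{prop finite conn groupoid} applies and tells us that $X$ has dimension exactly $p(X) = n$.

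There is no substantive obstacle here; the corollary is essentially a packaging of the two propositions in the case $|X_0|=1$. The only thing worth stating explicitly in the write-up is the identification $p(X) = n$ so the reader does not need to unfold \cref{def size p} for themselves.
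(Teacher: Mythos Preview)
Your proposal is correct and matches the paper's approach exactly: the corollary carries a \qed\ in the paper with no written proof, as it is meant to follow immediately from \cref{dimension upper bound} and \cref{prop finite conn groupoid} once one observes that $p(X)=n$ in the reduced case. Your explicit identification of $p(X)$ with $n$ is a helpful addition for the reader but introduces nothing new.
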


\begin{example}
If $X$ is a nonempty partial groupoid of dimension $n$, then its reduction $\mathcal{R}X$ also has dimension $n$ (see \cite[\S5.1]{HackneyLynd}).
This is easily seen since the canonical map $X \to \mathcal{R}X$ is surjective (which implies that $\mathcal{R}X$ is $n$-skeletal) and preserves nondegenerate simplices (which implies that $\mathcal{R}X$ has dimension at least $n$).
Notice that $p(\mathcal{R}X) > p(X)$ whenever $X$ is a connected finite partial groupoid with $|X_0| > 1$.
\end{example}

\begin{example}[Localities]
Suppose $X = (\mathcal{M}, \Delta)$ is a finite objective partial group \cite[Definition 2.6]{Chermak:FSL}, and let $Y$ be the maximal subgroupoid of its associated transporter category from \cite[Remark 2.8]{Chermak:FSL}.
The connected components of $Y$ correspond to conjugacy classes of $\Delta$.
The dimension of $Y$ is $\max_{P\in\Delta}(|P^\mathcal{M}|\times |N_\mathcal{M}(P)|)-1$ where $P^\mathcal{M}$ is the conjugacy class of $P$ in $\mathcal{M}$ and $N_\mathcal{M}(P)$ the normalizer of $P$ in $\mathcal{M}$ \cite[p.62]{Chermak:FSL}.
This is also the dimension of $X$, since the canonical surjective map $Y\to X$ preserves nondegenerate elements.
In particular, if $(\mathcal{L},\Delta,S)$ is a locality with fusion system $\mathcal{F}$ \cite[Definition 2.9]{Chermak:FSL}, then its dimension is $\max_{P\in\Delta}(|P^\mathcal{F}|\times |N_\mathcal{L}(P)|)-1$. 
Moreover, if $(\mathcal{L},\Delta,S)$ is a $\Delta$-linking system \cite[below Definition 2.9]{Chermak:FSL}, its dimension is $\max_{P\in\Delta}(|P^\mathcal{F}|\times|Z(P)|\times |\operatorname{Aut}_\mathcal{F}(P)|)-1$ which only depends on $\Delta$ and $\mathcal{F}$.
\end{example}

\begin{theorem}\label{dimension implies groupoid}
Suppose $X$ is a connected partial groupoid.
If $X$ has dimension $p$, then $X$ is a groupoid.
\end{theorem}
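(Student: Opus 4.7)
The plan is to verify, via \cref{groupoid characterizations}, that the Bousfield--Segal map $\bous_n\colon X_n\to X_1\times_{X_0}\cdots\times_{X_0}X_1$ is bijective for every $n\ge1$. Injectivity is automatic since $X$ is spiny, so everything reduces to surjectivity. The key reduction is the following: if some vertex $z\in X_0$ is the initial vertex of a nondegenerate $p$-simplex $x^z$ with $x^z\cdot\{0\}=z$, then by \cref{lem degenerate element} the edges $x^z_{01},\dots,x^z_{0p}$ are nonidentity and pairwise distinct, so using $n_z\le p$ we conclude $\hom(z,-)=\{\id_z,x^z_{01},\dots,x^z_{0p}\}$. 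For any $\alpha\colon[n]\to[p]$ with $\alpha(0)=0$, the simplex $x^z\cdot\alpha$ starts at $z$ and satisfies $\bous_n(x^z\cdot\alpha)=(x^z_{0\alpha(1)},\dots,x^z_{0\alpha(n)})$; varying $\alpha$ realizes every tuple in $\hom(z,-)^n$, giving surjectivity of $\bous_n$ over $z$.

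The main work is then to exhibit such a nondegenerate $p$-simplex at every vertex. Let $A\subseteq X_0$ be the set of vertices that are the zeroth vertex of some nondegenerate $p$-simplex. Then $A$ is nonempty since $\dim X=p$. If $z\in A$ with witness $x^z$, then for each $k\in\{0,1,\dots,p\}$ the simplex $x^z\cdot\sigma_k$, where $\sigma_k$ is the automorphism of $[p]$ swapping $0$ and $k$, is again nondegenerate and begins at $x^z\cdot\{k\}$, so every such vertex lies in $A$. These vertices are precisely the targets of the outgoing edges from $z$, so $A$ is closed under moving along an outgoing edge; since edges in a partial groupoid are invertible, $A$ is in fact closed under passage to any edge-neighbor. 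Connectedness of $X$ then forces $A=X_0$, and the first paragraph delivers surjectivity of $\bous_n$ at every vertex.

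The main obstacle is this propagation step, which requires combining the full $\fin$-action on a nondegenerate $p$-simplex (to permute the base vertex within one simplex) with the invertibility of edges in a partial groupoid (to traverse paths in either direction). Everything else is formal bookkeeping with the identity $\bous_n(x\cdot\alpha)=(x_{\alpha(0)\alpha(1)},\dots,x_{\alpha(0)\alpha(n)})$ and the degeneracy criterion of \cref{lem degenerate element}.
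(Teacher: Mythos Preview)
Your proof is correct and follows essentially the same approach as the paper's: both use a nondegenerate $p$-simplex to realize $\hom(z,-)$ as $\{f_{0j}\}$, invoke connectedness to reach every object, and then pull back along a suitable $\alpha\colon[m]\to[p]$ to hit an arbitrary Bousfield--Segal tuple. The only difference is organizational: the paper keeps a single nondegenerate $p$-simplex and shows every object occurs among its vertices $x_t$ (then takes $\alpha(0)=t$), whereas you permute that simplex by $\sigma_k$ to manufacture a new nondegenerate $p$-simplex based at each vertex and frame the connectedness step as closure of the set $A$.
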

\begin{proof}
We wish to show that the Bousfield--Segal map $\bous_m$, which by hypothesis is an injection, is in fact a bijection for every $m \geq 1$.
Consider $(g_{01}, \dots, g_{0m}) \in \prod_{i=1}^m X_1$ with the $g_{0i}$ having common domain $y$. 
We show this element is in the image of $\bous_m$.

Since $X$ has dimension $p$, there is a nondegenerate $p$-simplex $(f_{ij})$.
For each $t$, let $x_t$ be the common domain of the $f_{tk}$ (so $f_{tt} = \id_{x_t}$). 
By \cref{lem degenerate element}, the set 
\[ \{ f_{t0}, f_{t1}, \cdots, f_{tp} \} \subseteq \hom(x_t, -) \subseteq  X_1 \] has cardinality $p+1$ (using $f_{ij} = f_{tj} f_{it} = f_{tj} f_{ti}^{-1}$ is an identity only for $i=j$).
But $\hom(x_t,-)$ has cardinality at most $p+1$, so we have $\{ f_{t0}, f_{t1}, \cdots, f_{tp} \} = \hom(x_t, -)$ for each $t$.
We observe that every object of $X$ appears among the $x_t$.
Indeed, if there is a morphism $f \colon x_t \to z$, then $f = f_{tk}$ for some $k$, hence $f^{-1} = f_{kt}$ has domain $z = x_k$.
So any object which is joined by an edge to a member of $\{x_0, \dots, x_p\}$ is itself a member of this set.
Since $X$ is connected, there is always a zig-zag of edges between any two objects, and we conclude inductively that $X_0 = \{ x_0, \dots, x_p \}$.

Suppose $y=x_t$, and write $\zeta \colon \hom(y, -) \to \{ 0, 1, \dots, p \}$ for the bijection with $\zeta(f_{ti}) = i$.
Define a function $\alpha \colon [m] \to [p]$ by $\alpha(0) = \zeta(f_{tt}) = t$ and $\alpha(i) = \zeta(g_{0i})$.
Then $(h_{ij}) \coloneq (f_{ij})\cdot \alpha = (f_{\alpha(i),\alpha(j)})$ has for $1 \leq j \leq m$
\[
 	\zeta(h_{0j}) = \zeta(f_{t,\alpha(j)}) = \alpha(j) = \zeta(g_{0j})
\] 
so $h_{0j} = g_{0j}$.
Thus $\bous_m ((f_{ij}) \cdot \alpha) = (g_{01}, \dots, g_{0m})$, as desired.
\end{proof}

The map $X_0 \to X_1$ induced by the unique map $[1] \to [0]$ is injective, and we will use the notation $X_1 \setminus X_0$ as shorthand for the set of nonidentity edges of $X$.
The following corollary applies in particular when the dimension of $X$ is $|X_1| - 1$.

\begin{corollary}\label{cor non-idens dimension}
Let $X$ be a connected partial groupoid. If $X$ has dimension $m \geq |X_1\setminus X_0|$, then $X$ is a group.
\end{corollary}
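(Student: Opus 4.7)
The plan is to squeeze the dimension $m$ between $|X_1 \setminus X_0|$ and $p(X)$, apply \cref{dimension implies groupoid} to upgrade $X$ to a groupoid, and finish by a counting argument.

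First I would verify that $X$ is finite so that the earlier results apply. The hypothesis gives $|X_1 \setminus X_0| \leq m < \infty$; connectedness then forces $X_0$ to be finite (if $|X_0| > 1$, each object is the endpoint of a non-identity edge, so $|X_0|$ is bounded in terms of $|X_1 \setminus X_0|$), and hence $X_1$ is finite. Next, I would observe that for every $x \in X_0$, the set $\hom(x,-) \setminus \{\id_x\}$ is contained in $X_1 \setminus X_0$, since $\id_x$ is the only identity edge with domain $x$. Therefore $n_x \leq |X_1 \setminus X_0|$ for each $x$, which gives $p(X) \leq |X_1 \setminus X_0|$. Combined with \cref{dimension upper bound} ($m \leq p(X)$) and the hypothesis ($m \geq |X_1 \setminus X_0|$), this squeezes the inequalities to equalities:
\[
m = p(X) = |X_1 \setminus X_0|.
\]

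Since $X$ has dimension exactly $p(X)$, \cref{dimension implies groupoid} implies that $X$ is a (finite connected) groupoid. Let $k = |X_0|$ and $a = |\hom(x,x)|$ for any $x \in X_0$; by connectedness $a$ does not depend on $x$, and the remark following \cref{prop finite conn groupoid} gives $p(X) + 1 = ak$. A direct count then yields
\[
|X_1 \setminus X_0| = \sum_{x \in X_0} n_x = k(ak - 1) = k \cdot p(X).
\]
Combined with $|X_1 \setminus X_0| = p(X)$, this forces $(k-1)(ak-1) = 0$, so either $k = 1$ or $ak = 1$. In the first case $X$ is a group; in the second, $k = a = 1$ and $X$ is the trivial group.

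There is no real obstacle: once one notices that $p(X) \leq |X_1 \setminus X_0|$ always holds, the hypothesis forces equality in the dimension bound, and the rest is a short calculation built on top of \cref{dimension implies groupoid} and the edge-count $p + 1 = |\hom(x,x)| \cdot |X_0|$ already established for finite connected groupoids.
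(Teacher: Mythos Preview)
Your proof is correct and follows essentially the same route as the paper: bound $p \leq |X_1 \setminus X_0|$, squeeze against \cref{dimension upper bound} to get $m = p$, and apply \cref{dimension implies groupoid}. The only differences are cosmetic---you explicitly verify finiteness of $X_1$ (left implicit in the paper) and conclude $|X_0|=1$ via the counting identity $|X_1\setminus X_0| = |X_0|\cdot p$ rather than by directly observing that some $\hom(x,-)\setminus\{\id_x\}$ must exhaust $X_1\setminus X_0$.
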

\begin{proof}
Since $\hom(x,-) \setminus \{\id_x\} \subseteq X_1 \setminus X_0$ for each $x$, then we have $n_x \leq |X_1 \setminus X_0|$ (using the notation of \cref{def size p}), hence $p\leq |X_1 \setminus X_0| \leq m$.
The dimension of $X$ is at most $p$ by \cref{dimension upper bound}, so $p=m$.
\Cref{dimension implies groupoid} implies that $X$ is a groupoid.
We then notice that $X$ has exactly one object.
Indeed, there is an $x$ with $\hom(x,-) \setminus \{\id_x\} = X_1 \setminus X_0$.
If $y$ is an object and $f$ is a nonidentity morphism with domain or codomain $y$, then both $f$ and $f^{-1}$ are in $\hom(x,-)$, hence $y=x$.
\end{proof}

\section{Finiteness}

In light of \cref{pg n skel}, if $X$ is a finite partial group, (meaning just that $X_1$ is finite), then $X$ may \emph{truly} be regarded as a finite object.
For example, we have the following proposition about im-partial subgroups (in the sense of \cite[\S 9]{ChermakFL3}).
An im-partial subgroup of $X$ is nothing but a nonempty symmetric subset $Y\subseteq X$.

\begin{corollary}\label{prop finite impartial}
If $X$ is a finite partial group, then $X$ has only finitely many im-partial subgroups.
\end{corollary}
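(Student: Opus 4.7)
The plan is to leverage \cref{pg n skel}, which gives that $X$ is $n$-skeletal for $n = |X_1|-1$, to show that any im-partial subgroup of $X$ is determined by a finite amount of data. First I would observe that each $X_k$ is finite: since $X$ is reduced, $X_0$ is a singleton; since $X$ is spiny, the Bousfield--Segal map $\bous_k$ embeds $X_k$ into $X_1^k$, which is finite by the hypothesis on $X_1$.

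Next I would prove that every im-partial subgroup $Y\subseteq X$ is itself $n$-skeletal. Given $y \in Y_m$ with $m > n$, the element $y$ is a simplex of $X = \sk_n X$, so we can write $y = x \cdot \sigma$ for some noninvertible surjection $\sigma \colon [m] \to [k]$ with $k \leq n$ and some $x \in X_k$. Choosing any section $\delta$ of $\sigma$ yields $y \cdot \delta = x \cdot (\sigma \delta) = x$. Since $Y$ is a symmetric subset (i.e., closed under the structure maps), this forces $x \in Y_k$, and hence $y = x \cdot \sigma$ lies in $(\sk_n Y)_m$. Thus $Y = \sk_n Y$.

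Finally, because $Y$ is $n$-skeletal, it is determined by its restriction $\iota^* Y$ to $\fin_{\leq n}^\op$, i.e., by the subsets $Y_0 \subseteq X_0, \ldots, Y_n \subseteq X_n$ (the inherited structure maps being forced by those of $X$). Only finitely many such tuples of subsets exist since each $X_k$ is finite, so \emph{a fortiori} the number of im-partial subgroups is finite. The main obstacle is showing that symmetric subsets inherit $n$-skeletality from the ambient symmetric set, but this reduces to the observation that a section of the witnessing codegeneracy $\sigma$ produces the required low-dimensional element $x$ inside $Y$.
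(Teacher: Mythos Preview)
Your proof is correct and rests on the same underlying idea as the paper's: a finite partial group is $n$-skeletal with each $X_k$ finite, so subobjects are determined by finitely many subset choices in degrees $\leq n$.

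The packaging differs slightly. The paper deduces the corollary from the proposition that there are, up to isomorphism, only finitely many partial groups of a given finite cardinality, proved by noting that $\fin_{\leq n}$ is a finite category and there are only finitely many functors $\fin_{\leq n}^\op \to \set$ sending $[k]$ into $X_1^k$. Your argument is more direct and makes explicit the step that an im-partial subgroup $Y$ is itself $n$-skeletal. You obtain this via the section trick, which in fact shows the general statement that any symmetric subset of an $n$-skeletal symmetric set is $n$-skeletal; an alternative available here is simply to observe that $Y$ is a partial group with $|Y_1|\leq n+1$ and apply \cref{pg n skel} to $Y$. Your route has the small advantage of not needing to verify that $Y$ is again a (reduced, spiny) partial group.
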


This is an immediate consequence of the following:

\begin{proposition}
There are, up to isomorphism, only finitely many partial groups with a given finite cardinality.
\end{proposition}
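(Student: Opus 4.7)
The plan is to combine \cref{pg n skel} with the spiny condition to reduce everything to a finite amount of combinatorial data. Fix $N$ and consider a partial group $X$ with $|X_1| = N$. By \cref{pg n skel}, $X$ is $(N{-}1)$-skeletal, so $X \cong \iota_! \iota^* X$ where $\iota \colon \fin_{\leq N-1} \to \fin$ denotes the inclusion. Hence the isomorphism class of $X$ is determined by the restriction $\iota^* X$, and it suffices to show that there are only finitely many (isomorphism classes of) reduced spiny functors $\fin_{\leq N-1}^\op \to \set$ with $|X_1| = N$.

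Next I would observe that $\fin_{\leq N-1}$ is a \emph{finite} category: it has $N$ objects, and each hom-set $\fin([j],[k])$ with $j, k \leq N-1$ is the finite set of all functions $[j]\to[k]$. So to specify such a functor, one must give a finite list of sets $X_0, X_1, \dots, X_{N-1}$ together with, for each of the finitely many morphisms $\alpha$ in $\fin_{\leq N-1}$, a function between two of these sets.

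Then I would use the spiny condition to control the cardinalities of the $X_k$. Since $X$ is reduced, $|X_0| = 1$, so the fiber product $X_1 \times_{X_0} \cdots \times_{X_0} X_1$ of $k$ copies of $X_1$ is just $X_1^k$ of size $N^k$. The spiny condition (applied to the standard spine) gives an injection $\edgemap_k \colon X_k \hookrightarrow X_1^k$, so $|X_k| \leq N^k$ for $1\leq k \leq N-1$. Up to isomorphism of partial groups we may take $X_0 = \{\ast\}$ and $X_1 = \{1, \dots, N\}$, and then each $X_k$ with $k\geq 2$ may be identified with a subset of $X_1^k$, of which there are $2^{N^k}$ possibilities.

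Having fixed underlying sets of these bounded sizes, the functorial structure consists of finitely many functions between finite sets, of which there are finitely many choices. Multiplying these finite counts across all $2\leq k \leq N-1$ and all morphisms of $\fin_{\leq N-1}$ gives an explicit (if wildly inefficient) finite upper bound on isomorphism classes. There is no real obstacle — the content of the argument is entirely carried by \cref{pg n skel}, which turns the \emph{a priori} infinite specification of a partial group into a finite one; the remainder is a bookkeeping argument about functors out of a finite category to finite sets.
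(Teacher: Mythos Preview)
Your proof is correct and follows essentially the same route as the paper's: both use \cref{pg n skel} to reduce to functors on the finite category $\fin_{\leq n}$, and then the spiny (plus reduced) condition to bound each $X_k$ by a subset of $X_1^k$, leaving only finitely many choices. The paper compresses this into a single sentence, while you have spelled out the intermediate steps (the Kan extension formulation of skeletality, the explicit cardinality bounds, and the bookkeeping for the functorial maps), but there is no substantive difference in strategy.
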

\begin{proof}
Let $X_1$ have cardinality $n+1$.
The subcategory $\fin_{\leq n} \subset \fin$ is finite, so there are only finitely many functors $\fin_{\leq n}^\op \to \set$ sending $[k]$ to a subset of $X_1^k$.
\end{proof}

\appendix

\section{Proof of Theorem~\ref{skel spiny thm}}

We say a symmetric set is \emph{spiny in degrees below $q$} if $\edgemap_T$ is injective for every spine $T$ of $[n]$ for each $1 \leq n \leq q$. 
The inductive proof of Theorem 3.6 of \cite{HackneyLynd} shows that we can replace `every' by `some' in the preceding sentence.
The forward direction of the theorem is immediate.
Recall the Eilenberg--Zilber lemma (see \cite[Proposition 6.9]{BergerMoerdijk:OENRC}): 
\begin{lemma}
Let $X$ be a symmetric set.
If $x \in X_m$, then there is a pair $(y, \sigma)$ with $y \in X_k$ nondegenerate, $\sigma \colon [m] \twoheadrightarrow [k]$ surjective, and $x = y\cdot \sigma$. 
This data is essentially unique in the following sense: given another such pair $(y' \in X_{k'}, \sigma' \colon [m] \twoheadrightarrow [k'])$, there is a unique automorphism $\alpha$ with $y = y' \cdot \alpha$ and $\alpha \sigma = \sigma'$.
\end{lemma}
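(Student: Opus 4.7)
The plan is to separate the statement into existence and essential uniqueness, handling them by different techniques.

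For existence, I would induct on $m$. If $x \in X_m$ is nondegenerate, take $(y, \sigma) = (x, \id_{[m]})$. Otherwise, the definition of degenerate supplies a noninvertible surjection $\tau \colon [m] \twoheadrightarrow [m']$ with $m' < m$, together with $z \in X_{m'}$ satisfying $x = z \cdot \tau$; applying the inductive hypothesis to $z$ gives $z = y \cdot \rho$ with $y$ nondegenerate and $\rho$ surjective, so $x = y \cdot (\rho\tau)$ with $\rho\tau$ surjective, as required.

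For uniqueness, the key reduction is to show that the fibers of $\sigma$ and $\sigma'$ induce the same equivalence relation on $[m]$; this forces $k = k'$ and supplies the desired automorphism. I would prove this fiber equality by contradiction: suppose there exist $i, j \in [m]$ with $\sigma(i) = \sigma(j)$ but $\sigma'(i) \neq \sigma'(j)$. Since every surjection in $\fin$ admits sections, choose a section $\delta'$ of $\sigma'$ with $\delta'(\sigma'(i)) = i$ and $\delta'(\sigma'(j)) = j$. Then $\sigma\delta' \colon [k'] \to [k]$ sends the distinct elements $\sigma'(i), \sigma'(j)$ to the same point, so it is not injective. Applying $\delta'$ to $x = y \cdot \sigma = y' \cdot \sigma'$ yields $y' = x \cdot \delta' = y \cdot (\sigma\delta')$. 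Factoring $\sigma\delta'$ in $\fin$ as an injection after a surjection, the failure of injectivity forces the surjection factor to be noninvertible; this exhibits $y'$ as degenerate, contradicting the hypothesis.

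Once the fibers coincide, both $\sigma$ and $\sigma'$ are quotients of $[m]$ by the same equivalence relation, so $k = k'$ and there is a unique automorphism $\alpha$ of $[k]$ with $\alpha\sigma = \sigma'$. Choosing a section $\delta$ of $\sigma$ and computing $y = x\cdot\delta = y'\cdot\sigma'\delta = y'\cdot\alpha\sigma\delta = y'\cdot\alpha$ verifies $y = y'\cdot\alpha$; uniqueness of $\alpha$ among all candidates is immediate from $\sigma$ being epic in $\fin$. The main obstacle is the contradiction argument establishing fiber equality; once that is in hand, everything else is formal bookkeeping powered by the fact that surjections in $\fin$ admit sections and every map admits an epi-mono factorization.
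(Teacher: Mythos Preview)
Your proof is correct. The existence argument by induction on $m$ is the standard one, and the uniqueness argument via fiber equality is sound: the section $\delta'$ with $\delta'(\sigma'(i))=i$ and $\delta'(\sigma'(j))=j$ exists precisely because $\sigma'(i)\neq\sigma'(j)$, and the epi--mono factorization of $\sigma\delta'$ then exhibits $y'$ as degenerate. One small omission is that your contradiction only shows that $\sigma(i)=\sigma(j)$ implies $\sigma'(i)=\sigma'(j)$; you should remark that the reverse implication follows by exchanging the roles of $(y,\sigma)$ and $(y',\sigma')$, but this is trivial.

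As for comparison: the paper does not supply its own proof of this lemma. It is stated as a recollection of the Eilenberg--Zilber lemma with a citation to \cite[Proposition~6.9]{BergerMoerdijk:OENRC}, where it is proved in the generality of Eilenberg--Zilber categories. Your argument is essentially a specialization of that general proof to the case $\fin$, exploiting concretely that surjections of finite sets split and that every map of finite sets factors as a surjection followed by an injection. So there is no divergence in method to report; you have simply written out what the paper leaves to the reference.
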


\begin{lemma}\label{lem sigma equals tau}
Suppose that $X$ is spiny in degrees below $q$.
Let $\sigma, \tau \colon [m] \twoheadrightarrow [k]$ be two surjective maps with $\sigma(0) = 0 = \tau(0)$ and $k\leq q$, and let $x \in X_k$ be nondegenerate.
If $\bous_m(x\cdot \sigma) = \bous_m(x\cdot \tau)$, then $\sigma = \tau$.
\end{lemma}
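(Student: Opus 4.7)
My plan is to reduce the statement to an injectivity property of the map $\zeta \colon [k] \to X_1$, $j \mapsto x_{0j}$, and then to prove that property via a single degeneracy construction governed by spineyness.

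First I would unpack the Bousfield--Segal hypothesis. Since $\sigma(0) = \tau(0) = 0$, for each $i \in \{1, \ldots, m\}$ one has $(x\cdot \sigma)_{0i} = x_{\sigma(0)\sigma(i)} = x_{0\sigma(i)}$ and similarly $(x\cdot \tau)_{0i} = x_{0\tau(i)}$, so the assumption reads $x_{0\sigma(i)} = x_{0\tau(i)}$ for every such $i$. If $\zeta$ is injective, then cancellation gives $\sigma(i) = \tau(i)$ for $i \geq 1$, and combined with $\sigma(0) = \tau(0) = 0$ this forces $\sigma = \tau$.

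The remaining task is the auxiliary claim: if $x \in X_k$ is nondegenerate and $k \leq q$, then the edges $x_{00}, x_{01}, \ldots, x_{0k}$ are pairwise distinct. I would argue the contrapositive. Suppose $x_{0a} = x_{0b}$ with $a < b$ in $[k]$. Define $\rho \colon [k] \twoheadrightarrow [k-1]$ by $\rho(j) = j$ for $j < b$, $\rho(b) = a$, and $\rho(j) = j-1$ for $j > b$, and let $\delta \colon [k-1] \hookrightarrow [k]$ be the section of $\rho$ given by $\delta(j) = j$ for $j < b$ and $\delta(j) = j+1$ for $j \geq b$. Setting $y = x\cdot \delta \in X_{k-1}$, a direct case check gives $(y \cdot \rho)_{0j} = x_{0\,\delta\rho(j)} = x_{0j}$ for every $j \in \{1, \ldots, k\}$: for $j \neq b$ this uses $\delta\rho(j) = j$ together with $\delta(0) = 0$ (which holds since $0 \leq a < b$), while for $j = b$ one uses $\delta\rho(b) = a$ and invokes the hypothesis $x_{0a} = x_{0b}$. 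Hence $\bous_k(y\cdot \rho) = \bous_k(x)$; since $X$ is spiny at degree $k \leq q$, injectivity of $\bous_k$ on $X_k$ forces $x = y\cdot \rho$, so $x$ is degenerate, a contradiction.

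The main obstacle, such as it is, is the combinatorial bookkeeping needed to match coordinates through $\delta$ and $\rho$ uniformly across the cases $j = b$ versus $j \neq b$, and across the subcases $a = 0$ versus $a > 0$; the critical place where the collapsing equality $x_{0a} = x_{0b}$ is actually consumed is the single index $j = b$. Once the auxiliary claim is in hand, the lemma follows by coordinatewise injectivity of $\zeta$.
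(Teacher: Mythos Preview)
Your proof is correct and follows essentially the same approach as the paper's. The paper unpacks the Bousfield--Segal equality to $x_{0,\sigma(i)} = x_{0,\tau(i)}$ and then says that any $i$ with $\sigma(i)\neq\tau(i)$ forces $x$ to be degenerate ``as in the proof of \cref{lem degenerate element}''; your auxiliary claim and its proof simply spell out that deferred degeneracy argument explicitly, with the same section-and-collapse construction.
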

\begin{proof}
For all $i$ we have $x_{0,\sigma(i)} = (x\cdot \sigma)_{0i} = (x\cdot \tau)_{0i} = x_{0,\tau(i)}$.
The existence of $i$ with $\sigma(i) \neq \tau(i)$ implies $x$ is degenerate (as in the proof of \cref{lem degenerate element}).
\end{proof}

\begin{lemma}\label{lem endo iso}
Suppose $x\in X_k$ is nondegenerate and $x = x\cdot \alpha$ for some endomorphism $\alpha \colon [k] \to [k]$.
Then $\alpha$ is an isomorphism.
\end{lemma}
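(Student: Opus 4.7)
The plan is to argue by contrapositive: assume $\alpha \colon [k] \to [k]$ is not an isomorphism, and show that the equation $x = x \cdot \alpha$ forces $x$ to be degenerate, contradicting the hypothesis. No appeal to \cref{lem sigma equals tau} or the Eilenberg--Zilber lemma should be needed; the argument is purely formal.

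First I would use the finiteness of $[k]$: for an endomap of a finite set, being an isomorphism is equivalent to being surjective. So if $\alpha$ is not an isomorphism, its image has strictly fewer than $k+1$ elements. Taking the standard epi--mono factorization of $\alpha$ in $\set$, I can write $\alpha = \iota \circ \sigma$ with $\sigma \colon [k] \twoheadrightarrow [j]$ surjective, $j < k$, and $\iota \colon [j] \hookrightarrow [k]$ injective.

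Then I would apply the fixed-point equation together with functoriality of the action:
\[
x \;=\; x \cdot \alpha \;=\; x \cdot (\iota \sigma) \;=\; (x \cdot \iota) \cdot \sigma.
\]
Setting $y = x \cdot \iota \in X_j$, this exhibits $x$ as $y \cdot \sigma$, where $\sigma$ is a noninvertible surjection (noninvertible precisely because $j < k$). By the definition of degeneracy given before \cref{def skeletal}, $x$ is degenerate, contradicting the hypothesis on $x$.

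There is really no obstacle: the only ingredients are the definition of a degenerate element and the observation that any non-bijective endomap of a finite set factors through a set of strictly smaller cardinality. This makes the lemma a clean, self-contained step that can then be freely invoked in the subsequent analysis toward \cref{skel spiny thm}.
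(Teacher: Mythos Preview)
Your argument is correct and in fact more direct than the paper's. Both proofs begin with the epi--mono factorization of $\alpha$, but you then immediately observe that if $\alpha$ is not bijective the surjective part $\sigma\colon [k]\twoheadrightarrow [j]$ has $j<k$, so $x=(x\cdot\iota)\cdot\sigma$ witnesses $x$ as degenerate. The paper instead applies the Eilenberg--Zilber lemma to $x\cdot\alpha^+$, writing it as $z\cdot\sigma$ with $z$ nondegenerate, and then invokes the uniqueness clause of Eilenberg--Zilber to force $\sigma\alpha^-$ to be invertible. Your route avoids the Eilenberg--Zilber lemma altogether and makes the lemma entirely self-contained from the bare definition of degeneracy; the paper's approach, while valid, threads through more machinery than is needed here.
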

\begin{proof}
Factor $\alpha$ as $\alpha^+\alpha^-$ with $\alpha^-$ surjective and $\alpha^+$ injective.
Using the Eilenberg--Zilber lemma, we write $x\cdot \alpha^+$ as $z \cdot \sigma$ for some surjective map $\sigma$ and some nondegenerate element $z$.
Thus $x = x\cdot \alpha = z\cdot (\sigma \alpha^-)$, so by the uniqueness part of the Eilenberg--Zilber lemma, $\sigma \alpha^-$ is an isomorphism.
In particular, $\alpha^-$ is an automorphism of $[k]$, from which we deduce that $\alpha$ is an isomorphism since it is an injective endomorphism of the finite set $[k]$.
\end{proof}

\begin{lemma}\label{lem different elts}
Suppose $X$ is spiny in degrees below $q$.
Let $k,\ell \leq q$ and  $x \in X_k$ and $y\in X_\ell$ are nondegenerate.
If $\sigma \colon [m] \twoheadrightarrow [k]$ and $\tau \colon [m] \twoheadrightarrow [\ell]$ are surjective maps with $\sigma(0) = 0 = \tau(0)$ and $\bous_m(x\cdot \sigma) = \bous_m(y\cdot \tau)$, then $x\cdot \sigma = y \cdot \tau$.
\end{lemma}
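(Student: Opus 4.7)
The plan is to extract from the hypothesis $\bous_m(x\cdot\sigma) = \bous_m(y\cdot\tau)$ a bijection $\phi\colon[k]\to[k]$ satisfying $y\cdot\phi = x$ and $\tau = \phi\sigma$; by contravariance of the action, $y\cdot\tau = y\cdot(\phi\sigma) = (y\cdot\phi)\cdot\sigma = x\cdot\sigma$ then follows immediately.

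The key preliminary fact I would establish first is this: for nondegenerate $z\in X_j$ with $j\leq q$, the starry edges $z_{0,0}, z_{0,1}, \dots, z_{0,j}$ are pairwise distinct. This is essentially implicit in the proof of \cref{lem sigma equals tau}, and I would argue it by contradiction along the lines of \cref{lem degenerate element}: given $z_{0,a} = z_{0,b}$ with $a\neq b$, construct a surjection $\rho\colon[j]\twoheadrightarrow[j-1]$ identifying $a$ with $b$ together with a section $\delta$ of $\rho$; the element $w\coloneqq(z\cdot\delta)\cdot\rho$ will then satisfy $\bous_j(w) = \bous_j(z)$, and injectivity of $\bous_j$ (since $j\leq q$) forces $z = w$, contradicting nondegeneracy.

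Given this fact, I would unpack the hypothesis as the family of identities $x_{0,\sigma(i)} = y_{0,\tau(i)}$ for $i\in[m]$. Surjectivity of $\sigma$ and $\tau$ then makes the sets $\{x_{0,j} \mid j\in[k]\}$ and $\{y_{0,j} \mid j\in[\ell]\}$ coincide, and the preliminary fact forces these to have cardinalities $k+1$ and $\ell+1$, so $k=\ell$. The unique bijection $\phi\colon[k]\to[k]$ satisfying $y_{0,\phi(j)} = x_{0,j}$ then satisfies $\phi(0)=0$ (by specializing to $i=0$ and using $\sigma(0)=\tau(0)=0$) and $\tau = \phi\sigma$ (by comparing $y_{0,\phi(\sigma(i))} = x_{0,\sigma(i)} = y_{0,\tau(i)}$ and invoking injectivity of $j\mapsto y_{0,j}$). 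Finally, $\bous_k(y\cdot\phi)$ has $j$-th component $y_{0,\phi(j)} = x_{0,j}$, so $\bous_k(y\cdot\phi) = \bous_k(x)$, and one more application of injectivity of $\bous_k$ yields $y\cdot\phi = x$.

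The main obstacle will be the preliminary fact on distinctness of starry edges of a nondegenerate simplex, since this is the only essential use of the spinyness hypothesis and requires an explicit combinatorial construction of $\rho$ and $\delta$ (paralleling \cref{lem degenerate element} but without the partial groupoid structure on hand). Once this fact is in place, the rest of the argument is bookkeeping in $\fin$ combined with contravariance of the $\sym$-action.
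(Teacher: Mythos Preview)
Your argument is correct and constitutes a genuinely different, somewhat more direct, route than the paper's. The paper chooses sections $\delta$ of $\sigma$ and $\epsilon$ of $\tau$ (with $\delta(0)=\epsilon(0)=0$), uses injectivity of $\bous_k$ and $\bous_\ell$ to obtain $x=y\cdot\tau\delta$ and $y=x\cdot\sigma\epsilon$, and then appeals to \cref{lem endo iso} (hence to the Eilenberg--Zilber lemma) to show that $\tau\delta$ and $\sigma\epsilon$ are mutually inverse bijections; a final application of \cref{lem sigma equals tau} gives $\sigma\epsilon\tau=\sigma$ and hence $x\cdot\sigma=y\cdot\tau$. Your approach instead isolates the distinctness of the starry edges $z_{0,0},\dots,z_{0,j}$ of a nondegenerate $z\in X_j$ with $j\le q$ as a standalone fact, and from it reads off the bijection $\phi$ and the identity $\tau=\phi\sigma$ directly, without ever invoking \cref{lem endo iso} or the Eilenberg--Zilber lemma. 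In effect your $\phi$ plays the role of the paper's $\tau\delta$, but you establish that it is a bijection by a counting argument rather than via the endomorphism lemma. What your approach buys is a more self-contained and elementary proof; what the paper's approach buys is maximal reuse of the lemmas already on the shelf. One small point worth making explicit in your write-up of the preliminary fact: when $z_{0,a}=z_{0,b}$ with $a\neq b$, arrange (by swapping $a$ and $b$ if necessary) that $b\neq 0$, so that the section $\delta$ of $\rho$ can be chosen with $\delta\rho$ fixing $0$; this is needed to compare $\bous_j(w)$ with $\bous_j(z)$.
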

\begin{proof}
Let $\delta \colon [k] \rightarrowtail [m]$ be a section of $\sigma$ with $\delta(0) = 0$ and $\epsilon \colon [\ell] \rightarrowtail [m]$ be a section of $\tau$ with $\epsilon(0) = 0$.
Since $\bous_m(x\cdot \sigma) = \bous_m(y\cdot \tau)$, for each $i$ we have $(x\cdot \sigma)_{0i} = x_{0,\sigma(i)}$ equal to $(y\cdot \tau)_{0i} = y_{0,\tau(i)}$.
Notice that \[ (y \cdot \tau \delta)_{0j} = y_{0, \tau \delta(j)}  = (y\cdot \tau)_{0,\delta(j)} = (x\cdot \sigma)_{0,\delta(j)} = x_{0,\sigma \delta(j)} = x_{0j}. \] 
Since $\bous_k (y\cdot \tau \delta) = \bous_k (x)$ and $k \leq q$, we have $x = y \cdot \tau \delta$.
A similar proof gives $y = x \cdot (\sigma \epsilon)$.

We thus have $x = x \cdot (\tau \delta \sigma \epsilon)$, so combining \cref{lem sigma equals tau} and \cref{lem endo iso} we see that $\tau \delta \sigma \epsilon$ is the identity on $[k]$. 
Likewise, $\sigma \epsilon \tau \delta$ is the identity on $[\ell]$, so $\ell = k$ and the maps $\tau \delta$, $\sigma \epsilon$ are inverse isomorphisms.
In particular, $\sigma \epsilon \tau$ is surjective, and
$y \cdot \tau = (x \cdot (\sigma \epsilon) ) \cdot \tau = x \cdot (\sigma \epsilon \tau)$.
By assumption, $\bous_m ( x\cdot (\sigma \epsilon \tau) ) = \bous_m (y \cdot \tau) = \bous_m (x \cdot \sigma)$, so \cref{lem sigma equals tau} implies that $\sigma \epsilon \tau = \sigma$.
We thus have $x\cdot \sigma = x \cdot (\sigma \epsilon \tau) = y \cdot \tau$.
\end{proof}

To prove \cref{skel spiny thm}, we just need to show that we can reduce to the situation of \cref{lem different elts}.
Given an element $z \in X_m$ with $m > q$, we can find a pair $(w, \gamma)$ with $w\in X_k$ nondegenerate, $\gamma \colon [m] \twoheadrightarrow [k]$ surjective, and $z = w \cdot \gamma$.
In particular, $k \leq q$.
If $\gamma(0) = i > 0$, we let $\alpha \colon [k] \to [k]$ be the bijection which interchanges $0$ and $i$ and fixes all other points.
Then $(w\cdot \alpha) \cdot \alpha \gamma = w \cdot \gamma = z$, so $(w\cdot \alpha, \alpha \gamma)$ is a pair as in the Eilenberg--Zilber lemma, and of course $\alpha \gamma(0) = \alpha(i) = 0$.
So we can always choose a representative of $z$ of the form $(x, \sigma)$ where $\sigma(0) = 0$.
We may thus conclude from \cref{lem different elts} that $\bous_m$ is injective.

\subsection*{Acknowledgements}
This work originated when the authors were visiting the Copenhagen Centre for Geometry and Topology, and we are grateful for its hospitality.
We also extend thanks to Jan Steinebrunner for a question that sparked this investigation, and to Justin Lynd for valuable conversations about this work.
We thank the referee for helpful feedback that improved the paper.

\bibliographystyle{plain}
\bibliography{skel}

\end{document}